\newtheorem{theorem}{Theorem}
\titlespacing*{\section}{0pt}{1pt}{1pt}
\titlespacing*{\subsection}{0pt}{1pt}{1pt}
\titlespacing*{\paragraph}{0pt}{1pt}{1pt}
\newtheorem{problem}[theorem]{Problem}
\newcommand{\calH}{\ensuremath{\mathcal{H}}}
\newcommand{\calE}{\ensuremath{\mathcal{E}}}
\author{Ueckerdt, Torsten}
\title{A Note on Polychromatic Colorings of Shift-Chains}
\begin{document}

We popularize the question whether, for $m$ large enough, all $m$-uniform shift-chain hypergraphs are properly $2$-colorable.
On the other hand, we show that for every $m$ some $m$-uniform shift-chains are not polychromatic $3$-colorable.

\section*{Vertex-Colorings of Shift-Chains}

An \emph{ordered $m$-uniform} hypergraph $\calH = (V,\calE)$ has vertex-set $V = [n]$ and edge-set $\calE \subseteq V^m$, where $a_1 < \cdots < a_m$ for every $A = (a_1,\ldots,a_m) \in \calE$.
In other words, vertices are linearly ordered (numbered) and edges are $m$-tuples (vectors) of vertices in increasing order.

If for two edges~$A = (a_1,\ldots,a_m)$ and $B = (b_1,\ldots,b_m)$ it holds $a_i \leq b_i$ for all $i \in [m]$, then we write $A \preceq B$ and call $A,B$ \emph{comparable}.
Then $\calH = (V,\calE)$ is a \textbf{shift-chain} if any two edges in $\calE$ are comparable.
Observe that this implies $|\calE| \leq m(|V|-m)+1$.
In particular, $2$-uniform shift-chains (equivalently, graphs of queue number~$1$) are $2$-degenerate and hence properly\footnote{no edge is monochromatic, i.e., has all vertices of the same color} $3$-colorable.

Consequently, every $m$-uniform shift-chain is properly $3$-colorable, by considering its restriction to (any) two coordinates.
Indeed, restricting each $m$-tuple edge in $\calH$ to the $2$-tuple of its first two coordinates gives a $2$-uniform shift-chain, each of whose proper colorings is also a proper coloring of $\calH$.

\smallskip

P{\'a}lv{\"o}lgyi introduces shift-chains in his PhD thesis~\cite{Pal10a} (see also the survey~\cite{PPT13} of Pach et al.) and asks:

\begin{problem}[P{\'a}lv{\"o}lgyi 2010~\cite{Pal10a}, Pach et al. 2013~\cite{PPT13}]{\ \\}\label{prob:2-colorable}
    \hspace*{-4pt}%
    Is there an $m \in \mathbb{N}$ such that every $m$-uniform shift-chain is properly $2$-colorable?
\end{problem}

In fact $m\geq 4$ is necessary for a positive answer of \cref{prob:2-colorable}.
A $3$-uniform, non-$2$-colorable shift-chain due to Radoslav Fulek~\cite{Pal10a,PPT13} (slightly simplified) is shown in \cref{fig:3-uniform-LB}.
In these figures, putting vertices as columns and coordinates as rows, comparable edges correspond to non-crossing polylines.

\begin{figure}[ht]
    \centering
    \includegraphics[page=1]{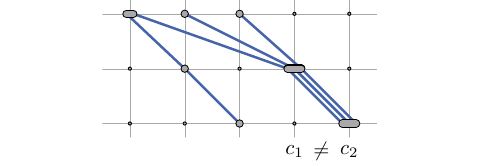}
    
    \vspace{15pt}
    
    \includegraphics[page=2]{3-uniform-not-2-colorable_ORCS.pdf}
    \caption{
        Top: Forcing different colors in any proper \mbox{$2$-coloring}.
        Bottom: Combining four copies of the $3$-uniform shift-chain above to force a monochromatic (black, thick) edge.
    }
    \label{fig:3-uniform-LB}
\end{figure}

While~\cref{prob:2-colorable} remains open, the stronger property of \textbf{polychromatic $3$-colorability} does not hold.
That is, we cannot $3$-color the vertices such that every edge contains vertices of all three colors.

\begin{theorem}\label{thm:polychromatic-3-coloring}
    For all $m \in \mathbb{N}$, some $m$-uniform shift-chains have no polychromatic $3$-coloring.
\end{theorem}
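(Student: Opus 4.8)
First I would restate polychromaticity pointwise: a $3$-colouring is polychromatic iff no edge misses a colour, equivalently iff each colour class is a transversal of $\calH$. So it suffices to build, for each $m$, a shift-chain in which \emph{every} $3$-colouring leaves some edge within at most two colour classes. The engine for this is a one-edge ``pinning'' gadget: if $Q=\{q_1<\cdots<q_{m-1}\}$ already uses \emph{exactly two} colours and $w>q_{m-1}$, then the single edge $Q\cup\{w\}$ is rainbow iff $c(w)$ is the third (missing) colour. Call such a two-coloured $(m-1)$-set a \emph{domino}. The plan is then to manufacture two dominoes $Q,Q'$ that are \emph{forced to omit different colours} and to attach both to one common vertex $w$ via edges $Q\cup\{w\}$ and $Q'\cup\{w\}$; polychromaticity would force $c(w)$ to equal two different colours at once, a contradiction. (A variant is to force $m$ targets to a common missing colour and cap them with a monochromatic edge $\{w_1,\dots,w_m\}$.)

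\textbf{The clean small cases.} To see the mechanism in its simplest form, I would first record that for $m\le 4$ one may take $\calH=K^{(m)}_{m+1}$, all $m$-subsets of an $(m{+}1)$-set. These are the complements of singletons, hence pairwise $\preceq$-comparable, so $K^{(m)}_{m+1}$ is a shift-chain; and it has no polychromatic $3$-colouring because an edge is rainbow iff no colour is unique, i.e.\ iff each of the three colours occurs at least twice, which is impossible on $m+1\le 5$ vertices. This already realises the two-domino idea: the forced-rainbow edge $\{1,\dots,m\}$ must contain a monochromatic pair, and removing each of the two ``complementary'' vertices yields two dominoes omitting different colours, both completed by the last vertex.

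\textbf{General $m$ and the main obstacle.} For $m\ge 5$ a single rainbow edge no longer pins enough (a balanced colouring with each colour appearing at least twice defeats $K^{(m)}_{m+1}$), so I would build the required size-$(m-1)$ dominoes by iteration, amplifying a small two-coloured block up to size $m-1$: append vertices one at a time, each pinned into the two colours already present by a previously constructed domino, and run this amplification along two branches that are forced (by an initial colour-difference extracted from a rainbow edge) to finish omitting different colours; the two branches then share the finishing vertex $w$. Combining several copies of the amplifier, much as four copies are combined in \cref{fig:3-uniform-LB}, should upgrade ``each colour occurs at least twice'' into an actual monochromatic or differently-pinned configuration. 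The hard part will be the \emph{chain condition}: every edge introduced anywhere in this nested construction must remain pairwise comparable under $\preceq$, which amounts to realising all $m$ coordinates as monotone ``tokens'' sweeping rightward and scheduling their moves so that the logical forcing still goes through while no two edges ever become incomparable. Laying the vertices on the line and timing the token-moves to simultaneously (i) keep the amplification edges, the branch-defining edges, and the two finishing edges at $w$ all comparable, and (ii) guarantee that the two branches provably omit different colours, is the real crux; once a valid layout is fixed, verifying non-$3$-colourability reduces to the routine pinning argument above.
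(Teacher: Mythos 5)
Your reformulation and your treatment of $m \le 4$ are correct: the $m$-subsets of $[m+1]$ are pairwise comparable (they are complements of singletons), and on at most five vertices some colour class has size at most one, so the edge avoiding that vertex misses a colour. But for $m \ge 5$ your proposal is a plan, not a proof, and you say so yourself: keeping every edge of the nested domino/amplifier construction pairwise $\preceq$-comparable is exactly the point you leave unresolved, and it is a genuine obstruction rather than a routine verification. Comparability in a shift-chain is a global constraint on all edges simultaneously; gadget-combination schemes of the kind you sketch (several interacting copies, two branches sharing a finishing vertex $w$) tend to create incomparable pairs precisely because both branches must reach $w$ while straddling each other's vertices. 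Nothing in your outline indicates how to schedule this, and the pinning argument cannot repair it afterwards.

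The idea you are missing is to induct on the uniformity $m$ rather than to build each example from scratch. Given an $m$-uniform shift-chain $\calH_m$ on $[n_m]$ with edges $A_1 \preceq \cdots \preceq A_t$ and no polychromatic $3$-colouring, append new vertices only to the \emph{right} of $[n_m]$: give each $A_i$ a private block of $m$ consecutive new vertices and form the $m$ edges obtained by extending $A_i$ with one vertex of its block, plus, for each $i$, a capping edge consisting of the $i$-th block together with one final common vertex. Because every new vertex lies to the right of all old ones and the blocks are ordered consistently with $A_1 \preceq \cdots \preceq A_t$, all these $(m+1)$-tuples are automatically pairwise comparable --- the crux you identify simply never arises. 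The forcing is then two lines: by induction some $A_i$ misses, say, colour $1$; if some vertex of its block is not coloured $1$, the corresponding extension of $A_i$ still misses colour $1$; otherwise the whole block is coloured $1$ and the capping edge contains at most two colours. Your parenthetical ``variant'' (force $m$ targets to a common missing colour and cap them) is in fact the right mechanism; what it needs in order to run is the inductive hypothesis as the source of the missed colour, and the append-to-the-right layout as the source of comparability.
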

\begin{proof}
    We give an inductive construction for the desired $m$-uniform shift-chains $\calH_m = ([n_m], \calE_m)$.
    For $m = 1$ it is enough to take $\calH_1 = ([n_1],\calE_1)$ with $n_1 = 1$ and $\calE_1 = \{ (1) \}$.

    Given $\calH_m = ([n_m], \calE_m)$ for some $m \geq 1$, let $A_1 \preceq \cdots \preceq A_t$ be the total ordering of edges in $\calH_m$.
    Now we take 
    \begin{align*}
        n_{m+1} &= n_m + m\cdot t + 1\\
        \calE' &= \{ A_{i,j} = (A_i, n_m + (i-1)\cdot m + j) \mid i \in [t], j \in [m]\}\\
        \calE'' &= \{B_i = (n_m + im + 1,\ldots, n_m+im + m ,n_{m+1}) \mid i \in [t]\}\\
        \calH_{m+1} &= ([n_{m+1}],\calE_{m+1} = \calE' \cup \calE'').
    \end{align*}
    Observe that $\calH_{m+1}$ is indeed an $(m+1)$-uniform shift-chain.
    See \cref{fig:m-uniform-LB} for an illustration.

    \begin{figure}[ht]
        \centering
        \includegraphics{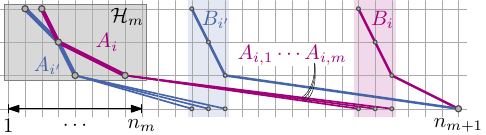}
        \caption{Constructing $\calH_{m+1}$ from $\calH_m$.}
        \label{fig:m-uniform-LB}
    \end{figure}

    Now, consider any $3$-coloring of $\calH_{m+1}$, i.e., of $[n_{m+1}]$.
    This is also a $3$-coloring of $[n_m] \subset [n_{m+1}]$ and hence of~$\calH_m$.
    By induction, this coloring is not polychromatic for~$\calH_m$.
    Without loss of generality let edge $A_i \in \calE_m$ have no vertex of color~$1$.
    Then, either one of $A_{i,1},\ldots,A_{i,m} \in \calE'$ has no vertex of color~$1$, or all vertices in $B_i$ except $n_{m+1}$ are colored~$1$ and hence $B_i \in \calE''$ has no vertex of color~$2$ or no vertex of color~$3$.
    In any case, our $3$-coloring is not polychromatic for $\calH_{m+1}$, as desired.
\end{proof}

Let us remark that, using the \emph{Union Lemma} of~\cite{DP23}, one can strengthen \cref{thm:polychromatic-3-coloring} if one could find for every $m \in \mathbb{N}$ two \mbox{$m$-uniform} shift-chains $\calH_1 = ([n],\calE_1)$, $\calH_2 = ([n],\calE_2)$ such that their union $\calH = ([n], \calE_1 \cup \calE_2)$ is not properly $3$-colorable.

\bibliographystyle{plainurl}
\bibliography{literature.bib}
\end{document}